\documentclass[a4paper]{article}
\textheight 9.3in \textwidth 6.75in
\baselineskip = 16pt

\usepackage[english]{babel}
\usepackage[utf8x]{inputenc}
\usepackage[T1]{fontenc}
\usepackage{comment}


\newcommand{\ignore}[1]{}

\usepackage[a4paper,top=3cm,bottom=2cm,left=3cm,right=3cm,marginparwidth=1.75cm]{geometry}

\usepackage{amsmath}
\usepackage{amsthm}
\usepackage{amssymb}
\usepackage{graphicx}
\usepackage[colorinlistoftodos]{todonotes}
\usepackage[colorlinks=true, allcolors=blue]{hyperref}
\usepackage{mathtools}
\usepackage{bm}

\newtheorem{theorem}{Theorem}[section]
\newtheorem{proposition}[theorem]{Proposition}

\newtheorem{corollary}[theorem]{Corollary}

\newtheorem{conjecture}[theorem]{Conjecture}
\newtheorem{definition}[theorem]{Definition}

\newtheorem{problem}[theorem]{Problem}
\newtheorem{remark}[theorem]{Remark}

\title{Asymptotically Almost Every $2r$-regular Graph has an Internal Partition}
\author{Nathan Linial\thanks{Department of Computer Science, Hebrew University, Jerusalem 91904,
    Israel. e-mail: nati@cs.huji.ac.il~. Supported by ISF Grant 1169/14, Local and Global Combinatorics.} \and  {Sria Louis\thanks{Department of Computer Science, Hebrew University, Jerusalem 91904,
    Israel. e-mail: sria.louis@cs.huji.ac.il~.}}}

\begin{document}
\maketitle

\begin{abstract}
An {\em internal partition} of a graph is a partitioning of the vertex set into two parts such that for every vertex, at least half of its neighbors are on its side.
We prove that for every positive integer $r$, asymptotically almost every $2r$-regular graph has an internal partition.
\end{abstract}

\section{Introduction} \label{problem_description}
\subsection{Notations}

We denote the neighbor set of a vertex $x$ in a graph $G=(V,E)$ by $N(x)$ and its degree by $d(x)$. The number of neighbors that $x$ has in a subset $A \subseteq V$ is denoted by $d_{A}(x) = |N(x)\cap A|$. We denote by $\mathbb{N}_{\ge k}$ the set $\{k, k+1,\ldots\}$.
For a vertex $x$ and a set $A$ we use the shorthand notation $A\cup x$ and $A\backslash x$ rather than $A\cup \{x\}$ and $A\backslash \{x\}$ respectively.

Our main concern is with \textit{partitions} of $V$ into two parts $\left\langle A,B\right\rangle$. We denote by $e(A, B)$ the {\em cut size} of this partition, i.e., the number of edges $e=(x,y)$ with $x\in A$ and $y \in B$.

\subsection{Internal Partitions}

Let $G=(V,E)$ be a simple undirected graph. A partition $\left\langle A,B\right\rangle$ of $V$ is called {\em external} if every vertex has at least as many neighbors on the other side as it has on its own side. Clearly, every graph has an external partition, e.g., one that maximizes the cut size $e(A,B)$. Likewise, in an {\em internal partition} every vertex has at least as many neighbors on its own side as on the other side. This requirement is clearly satisfied by the trivial partition $\left\langle \emptyset ,V\right\rangle$, but we insist on a \textit{non-trivial} internal partition where both parts are non-empty.

Yet, it is worthwhile to consider the following more general definition:

\begin{definition}
Let $G=(V,E)$ be a graph and let $a,b:V\rightarrow \mathbb{N}$ be two functions. We say that a partition $\left\langle A,B\right\rangle$ of $V$ is {\em $(a,b)$-internal}, if:
\begin{enumerate}
\item $d_A(x)\geq a(x)$ for every $x \in A$, and
\item $d_B(x)\geq b(x)$ for every $x \in B$.
\end{enumerate}
\end{definition}

In these terms, an \textit{internal partition} as mentioned above is the same as a $\left(\lceil{\frac{d(x)}{2}}\rceil,\lceil{\frac{d(x)}{2}}\rceil\right)$-internal partition.

The problem of the existence of, or the efficient finding of, internal partitions appears in the literature under various names: \textit{Decomposition under Degree Constraints} \cite{Thomassen1983},  {\em Cohesive Subsets} \cite{Moris2000}, \textit{$q$-internal Partition} \cite{BanLinial} and \textit{Satisfactory Graph Partition} \cite{Gerber2000}. A related subject is \textit{Defensive Alliance Partition Number} of a graph.  A short survey about \textit{alliance} and \textit{friendly} partitions, and the link to the internal partitions, can be found in \cite{ErohGera}. A generalization (and a survey) of \textit{Alliance Partitions} can be found in \cite{FernauRodriguez}.

It is also interesting to reformulate, saying that $\left\langle A,B\right\rangle$ is an $(a,b)$-internal partition iff \\ $d_B(x) \le d(x)-a(x)$ for every $x\in A$ and $d_A(y) \le d(y)-b(y)$ for every $y\in B$. From this perspective, the existence of an $(a,b)$-internal partition can be viewed as a strong {\em isoperimetric inequality} for $G$. Recall that the isoperimetric number (or Cheeger constant) of a graph is $\min\frac{e(A,B)}{|A|}$ over all partitions $\left\langle A,B\right\rangle$ and where $|A|\le |B|$. That is, it is defined by a set $A\subset V$ which minimizes the {\em average} $\frac{1}{|A|}\sum_{x\in A} d_B(x)$, whereas in considering $(a,b)$-internal partitions, we bound $d_B(x)$ for {\em each} vertex in $A$. We return to this perspective in the open problems section.

While every graph has an external partition, there are simple examples of graphs with no \textit{internal} partitions, e.g., cliques or complete bipartite graphs in which at least one part has odd cardinality. On the other hand, it is not easy to find large sparse graphs that have no internal partition. Also, as some of the theorems next mentioned show, {\em nearly} internal partitions (the exact meaning of this is clarified below) always exist.
\\
Stiebitz \cite{Stiebitz1996}, responding to a problem of Thomassen \cite{Thomassen1983}, made a breakthrough in this area. His results and later work by others in a similar vein are summarized in the following theorem.
\begin{theorem}\label{StiebitzKanekoDiwan}
Let $G=(V,E)$ be a graph and let $a,b:V(G)\rightarrow \mathbb{N}$. Each of the following conditions implies the existence of an $(a,b)$-internal partition:
\begin{enumerate}
\item \cite{Stiebitz1996} $d(x)\geq a(x)+b(x)+1$ for every $x\in V$.
\item \cite{Kaneko1998} $d(x)\geq a(x)+b(x)$ for every $x\in V$ and $G$ is triangle-free.
\item \cite{Diwan}\cite{Gerber2004} $d(x)\geq a(x)+b(x)-1$  for  every $x\in V$ and $\text{girth}(G)\ge 5$.
\end{enumerate}
\end{theorem}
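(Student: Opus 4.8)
The plan is to obtain all three statements from a single extremal argument, the only difference between the cases being how much slack the degree hypothesis leaves. For a partition $\langle A,B\rangle$ of $V$ put
\[
\phi(A,B)\;=\;e(A,B)\;+\;\sum_{x\in A}a(x)\;+\;\sum_{x\in B}b(x),
\]
a nonnegative integer, and consider first a partition minimizing $\phi$ over all partitions of $V$. Call $x\in A$ \emph{deficient} if $d_A(x)<a(x)$, and symmetrically for $B$. If some $x\in A$ is deficient, move it across; since this creates $d_A(x)$ new cut edges, removes the $d_B(x)$ cut edges at $x$, and replaces the charge $a(x)$ by $b(x)$,
\[
\phi(A\backslash x,\,B\cup x)-\phi(A,B)\;=\;\bigl(d_A(x)-a(x)\bigr)-\bigl(d_B(x)-b(x)\bigr).
\]
Here $d_A(x)-a(x)\le-1$, while $d_B(x)=d(x)-d_A(x)\ge d(x)-a(x)+1$, so the degree hypothesis forces $d_B(x)-b(x)$ to be at least $2$, $1$, or $0$ under conditions (1), (2), (3) respectively; in every case the difference above is strictly negative, contradicting minimality, and the mirror move handles a deficient vertex of $B$. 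Hence a $\phi$-minimal partition is deficiency-free, i.e.\ it already satisfies both inequalities in the definition of an $(a,b)$-internal partition.

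What this does not yet deliver is that both parts are nonempty, which is the real substance of the theorem (a trivial one-sided partition such as $\langle V,\emptyset\rangle$ is vacuously $(a,b)$-internal under mild conditions). So the actual step is to minimize $\phi$ over partitions with $A\ne\emptyset\ne B$ and argue that a minimizer is still deficiency-free. If $x\in A$ is deficient and $|A|\ge2$, the single-vertex move stays inside the admissible family and wins exactly as before, so the only genuine obstruction is a \emph{singleton} deficient side, say $A=\{x\}$ with $a(x)\ge1$ and $d_B(x)=d(x)$ large; there one must instead \emph{swap} --- send $x$ into $B$ while simultaneously pulling a carefully chosen vertex, or small set, out of $B$ into $A$, so that the new small side is itself satisfied and $\phi$ still strictly drops. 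Under hypothesis (1) the degree surplus is generous enough to reach nonemptiness more directly, by Stiebitz's reduction to a smaller graph: delete a vertex of excess degree, adjust the requirements on its neighbourhood so that the hypothesis survives, solve the smaller instance, and reassemble a nontrivial partition by reinserting the deleted vertex on the appropriate side. Under (2) and (3) the surplus is only $0$ or $-1$, and it is precisely triangle-freeness (resp.\ girth $\ge5$) that keeps the neighbourhoods around a deficient singleton spread out enough for a valid swap to exist.

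I expect the swap analysis under conditions (2) and (3) to be the main obstacle: one must rule out, under each girth bound, every configuration in which a deficient singleton side admits no deficiency-free swap, and the sharpness examples noted in the introduction --- cliques, complete bipartite graphs with an odd part, short odd cycles --- show that the girth hypotheses cannot simply be dropped, so some case analysis is unavoidable. A lesser point is that the reduction for condition (1) needs care at vertices whose requirement is already $0$ or whose degree is exactly $a(x)+b(x)+1$; such vertices are best pre-placed or treated directly before the general reduction. Everything else --- the potential computation, the choice of which of $a(x),b(x)$ to decrement, and the reinsertion step --- is routine.
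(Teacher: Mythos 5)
Your potential-function computation is fine as far as it goes, but note that it is just the paper's $w$ in disguise: since $a(A)+a(B)=a(V)$ and $b(A)+b(B)=b(V)$ are constants, $\phi(A,B)=a(V)+b(V)-\bigl(a(B)+b(A)-e(A,B)\bigr)$, so minimizing $\phi$ is exactly maximizing the Stiebitz weight $w(A,B)=a(B)+b(A)-e(A,B)$ used in Section 2. The fact that your minimization argument never uses triangle-freeness or the girth hypothesis is the warning sign: everything you have actually proved is the easy observation that a $w$-optimal partition has no movable deficient vertex, and the minimum over \emph{all} partitions may simply sit at $\langle V,\emptyset\rangle$. You correctly identify that nontriviality is ``the real substance of the theorem,'' but that is precisely the part you leave open. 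The ``swap at a deficient singleton'' step is not carried out, and there is no reason a single swap (or a swap of a small set) chosen locally must succeed; constructing a nonempty side that is itself satisfied is essentially the original problem restated, and none of the cited proofs (Stiebitz, Kaneko, Diwan/Gerber--Kobler) proceed this way. Likewise, your proposed reduction for case (1) --- delete a vertex, decrement requirements on its neighbours, solve, reinsert --- breaks down because the deleted vertex's neighbours may end up on both sides of the smaller instance's partition, so there is no consistent choice of which of $a(x),b(x)$ to decrement that survives reinsertion; this is not Stiebitz's argument.

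What actually closes the gap (and what the paper itself does when it proves Theorem \ref{main:thm} by the same methodology) is to abandon the attempt to repair a $\phi$-minimal \emph{partition} and instead work with $a$-degenerate and $a$-internal \emph{subsets}: one shows that there exist disjoint nonempty sets, one $a$-internal and one $b$-internal, and then Proposition \ref{AB_disjoint_internal} upgrades such an internal \emph{pair} to an internal \emph{partition} (nonemptiness is automatic because internal subsets are nonempty by definition). The extremal object is a set $A$ that is $a$-degenerate but not $(a-1)$-degenerate, chosen to maximize $w$ and, subject to that, minimize $|A|$; the local-move inequality you derived is then used against this choice, and it is exactly here that the structural hypotheses (triangle-freeness, girth at least $5$, or, in the paper's theorem, $4$-sparsity) enter, through arguments about common neighbourhoods of the low-degree vertices in $A$ and in $V\backslash A$ (compare Propositions \ref{CxAdjacent} and \ref{yzAdjacent}). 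Without this degenerate-set machinery, or a genuine substitute for it, your outline proves only that an optimal partition with both sides of size at least two is internal, which is far short of the theorem.
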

A very recent manuscript by Ma and Yang \cite{MaYang} shows that in the last statement of the theorem it suffices to assume that $G$ is $C_4$-free.

Further work in this area falls into several main categories:

\begin{itemize}
\item {\bf How hard is it to decide whether a given graph has an $(a,b)$-internal partition?} This question is investigated in a series of papers by Bazgan et al., surveyed in \cite{Bazgan2010}. Each existence statement in theorem \ref{StiebitzKanekoDiwan} comes with a polynomial time algorithm to find the promised partition. For large $a$ and $b$ the problem seems to become difficult. For example, a theorem of Chvátal \cite{Chvatal} says that the case $a=b=d-1$ is NP-hard for graphs in which all vertex degrees are $3$ and $4$.
\item {\bf Generalizations and variations:} Gerber and Kobler \cite{Gerber2000} introduced vertex- and edge-weighted versions of the problem and showed that these are NP-complete. Recent works by Ban \cite{Ban2016} and by Schweser and Stiebitz \cite{StiebitzSchweser2017} extend theorem \ref{StiebitzKanekoDiwan} to edge-weighted graphs.
    It is NP-hard to decide the existence of an internal {\em bisection} i.e., an internal partition with $ |A|=|B|$ \cite{Bazgan2006}. There is literature concerning approximate internal partitions, partitions into more than two parts etc. See \cite{Gerber2003}, \cite{Gerber2004} , \cite{Bazgan2005}, \cite{Bazgan2010}.
\item {\bf Sufficient conditions:} Shafique and Dutton \cite{Shafique2002} and \cite{Gerber2004} provide several sufficient conditions for the existence of an internal partition in general graphs, and more specific conditions for line-graphs and triangle-free graphs.
\item {\bf Necessary conditions:} It is proved in \cite{Shafique2002} that there is no forbidden subgraph characterization for the existence or non-existence of an internal partition. Given a graph's edge-density it is possible to bound the cardinality of the parts of an internal partition, if it exists \cite{Gerber2000}.
\item {\bf Regular graphs:} For $d=3,4$ the only $d$-regular graphs with no internal partition are $K_{3,3}, K_4, K_5$  \cite{Shafique2002}. As shown by Ban and Linial \cite{BanLinial}, every $6$-regular graph with $14$ or more vertices has an internal partition. The case of $5$-regular graphs remains open.

\end{itemize}
The most comprehensive survey of the subject of which we know is \cite{Bazgan2010}.

\section{The Theorem : 2r-regular Graphs} \label{thm}

As mentioned, the repertoire of graphs with no internal partitions seems rather limited, and for $d\in \{ 3,4,6\}$ there are only finitely many $d$-regular graphs with no internal partition. This has led to the following conjecture \cite{BanLinial}:

\begin{conjecture}\label{finite_number}
For every $d$ there are only finitely many $d$-regular graphs with no internal partitions.
\end{conjecture}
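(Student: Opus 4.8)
The plan is to establish the quantitative form of the conjecture: for each $d$ there is a threshold $n_0=n_0(d)$ so that every $d$-regular graph on at least $n_0$ vertices has an internal partition. Since only finitely many $d$-regular graphs have fewer than $n_0$ vertices, this yields the stated finiteness. First I would make two reductions. If $G$ is disconnected, placing one connected component in $A$ and the remaining components in $B$ already gives a nontrivial partition in which every vertex retains all $d$ of its neighbors, so we may assume $G$ is connected. The key observation is that a partition $\angles{A,B}$ with both parts of size at least $2$ is internal exactly when it is a local minimum of the cut size $e(A,B)$ under single-vertex moves: relocating $x\in A$ to $B$ changes $e(A,B)$ by $d_A(x)-d_B(x)$, and this is nonnegative for every vertex precisely when each vertex has a weak majority of neighbors on its own side. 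A singleton part contributes exactly $d$ to the cut, so if the minimum of $e(A,B)$ over all nontrivial partitions is strictly smaller than $d$, then the minimizer has both parts of size at least $2$ and is therefore internal.

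Consequently, a $d$-regular graph with no internal partition must be connected and must satisfy a strong connectivity condition: every partition into two parts each of size at least $2$ has cut size at least $d+1$, i.e.\ $G$ has no sparse cut. Such a graph also cannot be $C_4$-free. Indeed, by item 3 of Theorem \ref{StiebitzKanekoDiwan}, as refined to $C_4$-free graphs by Ma and Yang, every $C_4$-free graph with $d(x)\ge a(x)+b(x)-1$ has an $(a,b)$-internal partition; taking $a(x)=b(x)=\ceil{d/2}$ and using $2\ceil{d/2}-1\le d$ for all $d$, it follows that every $C_4$-free $d$-regular graph has an internal partition. Hence the only possible counterexamples are connected, highly connected, and locally dense, containing many triangles or $4$-cycles.

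For this residual class the approach is probabilistic. Because $G$ has no sparse cut, I would argue that it expands, so that in a uniformly random balanced bipartition the cross-degree of each vertex concentrates near $d/2$ and only an atypical set of vertices becomes \emph{bad}, meaning that their cross-degree exceeds $\floor{d/2}$. One then repairs the bad vertices by a sequence of local swaps, each chosen to strictly decrease a potential such as the number of bad vertices, until a genuine internal partition is reached. When $n$ is large there is ample room, far from any given bad vertex, in which to carry out such a repair, and this is the intended source of the threshold $n_0(d)$.

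The hard part, and precisely where the conjecture remains open, is to make this repair deterministic and uniform in $n$. A single swap can turn previously good vertices bad, and since the surviving instances are exactly the locally dense graphs, the affected neighborhoods overlap and repairs may cascade; bounding the total damage independently of $n$, for \emph{every} large $d$-regular graph rather than for almost all of them, is the crux. The expansion dichotomy is itself delicate, since the absence of a sparse cut only guarantees $e(A,B)\ge d$, which is weak isoperimetric control for large sets, so one would likely need to strengthen it or to recurse along any moderately sparse cut. These obstacles explain why the available results stop at the asymptotically-almost-sure statement and, through Theorem \ref{StiebitzKanekoDiwan}, the $C_4$-free and small-degree cases, leaving the full conjecture unproven.
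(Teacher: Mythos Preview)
The statement in question is Conjecture~\ref{finite_number}, and the paper does \emph{not} prove it. The paper explicitly presents it as open and proves only the weaker Theorem~\ref{main:thm}, which yields the asymptotic-almost-sure statement for even $d$. So there is no ``paper's own proof'' to compare your attempt against.

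Your write-up is not a proof either, and you say so yourself in the final paragraph. The preliminary reductions you give are correct: the characterization of internal partitions as local minima of $e(A,B)$, the consequence that a counterexample must have $e(A,B)\ge d+1$ for every partition with both parts of size at least $2$, and the use of the Ma--Yang $C_4$-free refinement of Theorem~\ref{StiebitzKanekoDiwan} to rule out $C_4$-free counterexamples, all check out. But once you reach the locally dense, well-connected residual class, your argument becomes a heuristic: take a random near-bisection, hope the bad set is small, and repair it by local swaps. You correctly identify that controlling the cascade of repairs \emph{deterministically and uniformly in $n$} is exactly the missing ingredient, and that the expansion you extract (edge boundary at least $d$) is far too weak to drive such an argument.

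In short, what you have written is an accurate diagnosis of why Conjecture~\ref{finite_number} is open, together with some valid reductions, but not a proof. This matches the paper's own stance: the conjecture remains unresolved.
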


The main theorem of this paper is a weaker version of this conjecture, namely, an asymptotic result for an even $d$.

We say that a graph $G$ is {\em $4$-sparse} if every set of four vertices spans at most four edges (i.e., has no $K_4$ and no diamond-graph as subgraph). We prove:

\begin{theorem}\label{main:thm}
Let $G$ be a graph and let $a,b:V(G)\rightarrow \mathbb{N}_{\geq 2}$ be such that $d(x)\geq a(x)+b(x)$ for every vertex $x\in V$. If $G$ is $4$-sparse then it has an $(a,b)$-internal partition.
\end{theorem}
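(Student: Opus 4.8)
The plan is to mimic the local-improvement / potential-function strategy that underlies Stiebitz's theorem (statement 1 of Theorem 1.2) and push it one step further using the $4$-sparseness hypothesis. Start from any partition $\langle A,B\rangle$ of $V$ and call a vertex $x$ \emph{deficient} if it violates its constraint, i.e.\ $x\in A$ with $d_A(x)<a(x)$, or $x\in B$ with $d_B(x)<b(x)$. Consider the potential $\Phi(A,B)=e(A,B)+\sum_{x\in A} a(x)+\sum_{x\in B} b(x)$ (or a suitable variant; the precise form is chosen so that moving a deficient vertex to the other side strictly decreases it). If $x\in A$ is deficient then $d_A(x)<a(x)\le d(x)-b(x)$, so $d_B(x)>b(x)-$ hmm, more carefully: $d_A(x)\le a(x)-1$ forces $d_B(x)=d(x)-d_A(x)\ge a(x)+b(x)-(a(x)-1)=b(x)+1$, so after moving $x$ to $B$ it comfortably satisfies its new constraint $b(x)$, and the cut drops by $d_B(x)-d_A(x)\ge 2$. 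Thus each such move strictly decreases $\Phi$, the process terminates, and we reach a partition with \emph{no} deficient vertices — BUT this only works if we can guarantee that the process never empties one of the sides; Stiebitz's extra $+1$ in the degree condition is exactly what prevents a two-cycle of moves from running away, and with only $d(x)\ge a(x)+b(x)$ we must work harder.

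The key new idea, I expect, is to handle the boundary case by a more careful accounting that exploits $4$-sparseness precisely when the naive potential argument stalls — i.e.\ when we are about to move a vertex $x$ that would leave $A$ (or $B$) empty, or when two vertices keep oscillating. The relevant obstruction is a pair (or small cluster) of adjacent vertices each of which is "tight" (has exactly $a(x)+b(x)$ as its degree and exactly meets its constraint with equality), and the claim will be that a dense local configuration of such tight vertices forces a $K_4$ or a diamond, contradicting $4$-sparseness. Concretely, I would argue: run the improvement process to a partition with no deficient vertices but possibly with one side too small; if both sides are nonempty we are done, so suppose the process forces $B=\emptyset$. Trace back one step: there was a partition $\langle A',B'\rangle$ with $B'=\{x\}$ a single vertex that was deficient, meaning $d_{B'}(x)=0<b(x)$, hence $d_{A'}(x)=d(x)\ge a(x)+b(x)\ge 4$ (using $a,b\ge 2$). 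Now I want to instead move \emph{some neighbor of $x$ from $A'$ to $B'$} rather than moving $x$ out — this is where $b\ge 2$ and the local structure come in: among $x$'s neighbors, pick one, say $y$, put it with $x$; then $y$ has $d_{\{x\}}(y)=1$ which is still short of $b(y)\ge 2$, so push in a second neighbor $z$ of $x$ that is also adjacent to $y$ if one exists — and $4$-sparseness controls exactly how the four vertices $x,y,z$ and a fourth interact, bounding the edge count and letting us certify that a bounded-size "seed" set $S\ni x$ can be placed in $B$ with everyone in $S$ internally happy and nobody in $A\setminus S$ harmed.

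So the main obstacle, and where the real content lies, is the \textbf{construction of the seed set $S$}: we need a small set $S$ containing a prescribed vertex $x$ such that (i) every vertex of $S$ has at least $b(\cdot)$ neighbors inside $S$, and (ii) $S$ is an independent-ish "sparse" set so that removing it from $A$ creates no deficiencies on the $A$-side. I would prove by an extremal/greedy argument that $4$-sparseness guarantees such an $S$ of bounded size (growing $S$ by adding, at each step, a vertex adjacent to the current under-satisfied member, and using the no-$K_4$/no-diamond condition to show the process closes up — each new vertex brings at least one new edge into $S$ but can bring at most a controlled number, so the "internal-degree deficit" $\sum_{v\in S}(b(v)-d_S(v))^+$ strictly decreases). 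Once the seed exists, symmetrize: split the original $V$ into the seed on the $B$-side and run the monovariant on the rest, never touching $S$; the $4$-sparseness is inherited by all induced subgraphs, so the induction/iteration goes through, and the potential argument of the first paragraph now terminates with both parts nonempty, yielding the desired $(a,b)$-internal partition. I would double-check the edge cases $a(x)$ or $b(x)$ equal to exactly $2$, and the interaction between several seed sets if more than one near-empty crisis occurs, but I expect these to be routine once the bounded-seed lemma is in hand.
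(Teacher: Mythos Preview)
Your proposal has a genuine gap: the ``seed set'' construction, where you locate the real content, is not well-defined and does not use $4$-sparseness in a way that makes it work. You ask for a set $S$ that is simultaneously $b$-internal (every $v\in S$ has $d_S(v)\ge b(v)\ge 2$, so $S$ is rather dense) and ``independent-ish'' so that removing $S$ from $A$ creates no $a$-deficiencies; these two requirements pull against each other and you give no mechanism to reconcile them. Your termination claim for the greedy growth of $S$ is also wrong as stated: adjoining a vertex $v$ contributes a fresh deficit $b(v)-d_S(v)$ while lowering the deficits of its current $S$-neighbours by at most $1$ each, so $\sum_{u\in S}(b(u)-d_S(u))^{+}$ need not decrease; $4$-sparseness bounds the edge count inside four-vertex subsets, not this global quantity. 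And even granting a seed, you have not explained why freezing $S$ and running the monovariant on the rest avoids the very empty-side pathology you set out to cure.

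The paper's argument is structurally different and invokes $4$-sparseness at one sharp moment rather than diffusely. One fixes a set $A$ that is $a$-degenerate but not $(a-1)$-degenerate, chosen to locally maximise the potential $w(A,B)=a(B)+b(A)-e(A,B)$ and, subject to that, to minimise $|A|$; this lexicographic extremal choice is the idea you are missing. From it one shows that $A$ is in fact $a$-internal, defines the tight sets $C=\{v\in A:d_A(v)=a(v)\}$ and $D=\{v\in B:d_B(v)\le b(v)-1\}$, and proves via the optimality of $A$ that every vertex of $C$ is adjacent to every vertex of $D$ and that $C$ contains an edge $yz$. Only now is $4$-sparseness used: together with the complete bipartite picture between $\{y,z\}$ and $D$ and the edge $yz$, it forces $y$ and $z$ to have no further neighbours in $C$ and no common neighbour inside $A$. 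Hence $A\setminus\{y,z\}$ is still $a$-internal while $|A|$ has dropped by two without decreasing $w$, contradicting minimality. In short, $4$-sparseness is a one-shot device to peel two specific tight vertices off an extremal $A$, not a diffuse bound on seed growth.
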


\begin{corollary}
If $G$ is a $4$-sparse graph and all its vertices have even degrees, then $G$ has an internal partition.
\end{corollary}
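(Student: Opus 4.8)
The plan is to argue by a minimal-counterexample / extremal argument on a well-chosen potential function. Suppose the theorem fails and let $G$ together with functions $a,b$ be a counterexample. Among all partitions $\langle A,B\rangle$ of $V(G)$, choose one that is, in some lexicographic sense, ``as internal as possible'': for instance, first minimize the number of \emph{deficient} vertices — those $x\in A$ with $d_A(x)<a(x)$ or $x\in B$ with $d_B(x)<b(x)$ — and among those, minimize $\sum_{x\ \mathrm{deficient}} (\text{deficiency of } x)$, and perhaps further tie-break by minimizing the cut size $e(A,B)$. Since $G$ is not $(a,b)$-internal, there is at least one deficient vertex; without loss of generality a vertex $v\in A$ with $d_A(v)<a(v)$, hence (using $d(v)\ge a(v)+b(v)$ and $b(v)\ge 2$) we have $d_B(v)\ge b(v)\ge 2$ and in fact $d_B(v) > d(v)-a(v)$.

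The heart of the argument is a local \emph{moving} / \emph{switching} step: moving $v$ from $A$ to $B$ cannot decrease the potential (else we contradict minimality), so it must \emph{create} new deficiencies or new slack that is compensated elsewhere. Analyzing which vertices are affected — $v$ itself, plus the neighbors of $v$ in $A$ and in $B$ — one gets strong structural constraints on $N(v)$ and on how $v$'s neighbors are partitioned. In particular the obstruction forces $v$ to have a neighbor $u$ that is ``just barely'' satisfied in $A$, and iterating this (following a sequence of such tight vertices, or ``augmenting'' along deficient/tight vertices as in the Stiebitz-style arguments behind Theorem \ref{StiebitzKanekoDiwan}) one is pushed into a small dense configuration on $v$, $u$, and their common neighbors. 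This is exactly where the $4$-sparseness hypothesis is used: the configuration that the extremal argument cannot avoid contains either a $K_4$ or a diamond on four vertices, i.e., four vertices spanning at least five edges — contradicting $4$-sparseness. The lower bounds $a(x),b(x)\ge 2$ are what guarantee enough edges are forced into the bad configuration (rather than, say, a path or a single triangle, which a $4$-sparse graph does allow).

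I expect the main obstacle to be the bookkeeping in the switching step: precisely controlling the net change of the potential when $v$ moves, so that the only surviving cases are the forbidden dense four-vertex subgraphs. One must handle the interaction at common neighbors of $v$ and the tight vertex $u$ — a common neighbor $w$ could itself become deficient when $v$ switches — and ensure that the chain of forced-tight vertices terminates in a bounded configuration rather than propagating indefinitely. Making the tie-breaking in the choice of extremal partition strong enough to kill the propagating cases, while still light enough to keep the remaining analysis finite, is the delicate part; once the surviving local picture is pinned down to ``four vertices, $\ge 5$ edges'', invoking $4$-sparseness closes the proof, and the Corollary is immediate by taking $a(x)=b(x)=d(x)/2$.
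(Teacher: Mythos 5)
Your reduction of the Corollary to Theorem \ref{main:thm} (take $a(x)=b(x)=d(x)/2$) is exactly what the paper does, so the issue is entirely with your proposed proof of the theorem itself, and there the proposal has a genuine gap: it is a strategy sketch whose decisive steps are exactly the ones left undone. The extremal object you propose --- minimize, over \emph{all} partitions, the number of deficient vertices, then the total deficiency, then the cut size --- does not work as stated, because the trivial partition $\langle \emptyset, V\rangle$ has no deficient vertices at all, so your lexicographic minimum is attained trivially and yields no structure. Preventing this collapse to a trivial partition is the central difficulty of the problem, and the paper handles it not by tie-breaking among partitions but by an entirely different framework (following Stiebitz): it works with sets $A$ that are $a$-degenerate but not $(a-1)$-degenerate (so $A$ always contains a non-empty $a$-internal kernel and cannot vanish), uses the potential $w(A,B)=a(B)+b(A)-e(A,B)$, whose change under moving one vertex is exactly $2(b(x)-d_B(x))$ or $2(a(y)-d_A(y))$, takes $A$ locally maximal for $w$ and of minimum size, and converts an $(a,b)$-internal \emph{pair} into an $(a,b)$-internal \emph{partition} via Proposition \ref{AB_disjoint_internal}. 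Nothing in your proposal plays the role of the degeneracy condition or of Proposition \ref{AB_disjoint_internal}, and your own text flags the consequence: you cannot guarantee that the chain of ``forced tight'' vertices terminates rather than propagating, which is precisely the problem the degeneracy/minimal-$|A|$ machinery exists to solve.

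A second, related gap is in how $4$-sparseness is supposed to finish the argument. In the paper the contradiction is not that a switching analysis around a single deficient vertex exhibits five edges on four vertices in $G$; rather, one first proves that the chosen $A$ is $a$-internal, that every vertex of the tight set $C=\{v\in A : d_A(v)=a(v)\}$ is adjacent to every vertex of the deficient set $D\subseteq B$ (via the $(a+1)$-internal sets $A_x\cup x$), and that $C$ contains an adjacent pair $y,z$; only then does $4$-sparseness force $y,z$ to have no further neighbors in $C$ and no common neighbor besides $x\in D$, whence $A\setminus\{y,z\}$ is still $a$-internal with $w$ not decreased, contradicting the minimality of $|A|$. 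A purely local analysis around one deficient vertex, as you describe, naturally produces only a triangle-type obstruction, which a $4$-sparse graph is perfectly allowed to contain; without the global minimality of $|A|$ there is no contradiction to reach. So the plan is pointed in the right general direction (extremal argument plus $4$-sparseness, in the Stiebitz tradition), but the potential you chose is unsound and the ``bookkeeping'' you defer is the actual proof. (A very minor further point: with $a=b=d/2$ the theorem's hypothesis $a,b\ge 2$ requires all degrees to be at least $4$, so degree-$2$ vertices need a separate word if one wants the corollary in full generality.)
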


We note the following simple fact about random regular graphs:

\begin{proposition}
For every $d\ge 3$ asymptotically almost every $d$-regular graph is $4$-sparse.
\end{proposition}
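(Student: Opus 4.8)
The natural route is a first-moment calculation in the \emph{configuration} (pairing) model of random $d$-regular graphs. Recall that this model places $n$ vertices, each carrying $d$ half-edges (``stubs''), and forms a uniformly random perfect matching on the $dn$ stubs, reading off a $d$-regular multigraph $\mathcal{G}(n,d)$; conditioned on the resulting multigraph being simple it is uniform over all $d$-regular simple graphs on $[n]$, and by a theorem of Bollob\'as it is simple with probability tending to the positive constant $e^{-(d^2-1)/4}$. The plan is to bound the expected number of copies of $K_4$ and of the diamond graph in $\mathcal{G}(n,d)$, conclude by Markov's inequality that a.a.s.\ neither occurs, and then transfer this to the uniform model.

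For the moment computation, fix a simple graph $H$ with $k$ vertices, $\ell$ edges, and maximum degree at most $d$. There are at most $n^k$ ordered choices of $k$ distinct vertices; for each, the number of ways to choose and assign stubs at those vertices realizing a copy of $H$ is a constant $c_{H,d}$ (at a vertex of $H$-degree $t$ one selects an ordered $t$-subset of its $d$ stubs); and the probability that $\ell$ prescribed disjoint stub-pairs all lie in the random matching is $\prod_{i=0}^{\ell-1}(dn-2i-1)^{-1}=(1+o(1))(dn)^{-\ell}$. Hence the expected number of copies of $H$ in $\mathcal{G}(n,d)$ is $O(n^{k-\ell})$. For $H=K_4$ we have $k=4$, $\ell=6$, giving $O(n^{-2})$; for $H$ the diamond graph, $k=4$, $\ell=5$, giving $O(n^{-1})$ --- and here we use $d\ge 3$, since a vertex of degree $2$ lies in no such subgraph. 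Both tend to $0$, so by Markov a.a.s.\ $\mathcal{G}(n,d)$ contains neither a $K_4$ nor a diamond as a subgraph.

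Finally, if a simple graph is not $4$-sparse then some four of its vertices span at least five edges and hence form, or contain, a diamond. Writing $B$ for the event that $\mathcal{G}(n,d)$ has a $K_4$ or a diamond subgraph, we obtain
\[
\Pr\nolimits_{\mathrm{unif}}\bigl[G \text{ is not } 4\text{-sparse}\bigr] \;=\; \Pr\bigl[B \mid \mathcal{G}(n,d)\text{ simple}\bigr] \;\le\; \frac{\Pr[B]}{\Pr[\mathcal{G}(n,d)\text{ simple}]} \;\longrightarrow\; 0,
\]
which is the assertion. The computation is entirely routine; the only points requiring care are the standard bookkeeping in the configuration model and the invocation of the ``simple with positive probability'' fact that lets one pass from the pairing model to the uniform model. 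One could instead quote the known Poisson limit laws for counts of fixed small subgraphs in random regular graphs (e.g.\ from Wormald's survey on models of random regular graphs), from which the vanishing of these expectations is immediate.
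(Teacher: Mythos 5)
Your proof is correct and follows essentially the same route as the paper: a first-moment bound in the configuration model on $4$-vertex subsets spanning at least five edges (equivalently, diamond or $K_4$ subgraphs), giving an expectation of order $O(1/n)$, followed by Markov's inequality and the standard transfer to the uniform model via the bounded-away-from-zero probability of simplicity. Your write-up merely makes explicit the transfer step that the paper leaves implicit.
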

\begin{proof}
We work with the configuration model of random $n$-vertex $d$-regular graphs. Let $X$ be the random variable that counts the number of sets of four vertices that span five or six edges. Then $\mathbb{E}(X)\le O(n^4)\cdot\frac{(dn-11)!!}{(dn-1)!!}=O(\frac 1n)$. 
\end{proof}


We can now conclude the theorem in the title of this paper, namely:

\begin{corollary}
Asymptotically almost every $2r$-regular graph has an internal partition.
\end{corollary}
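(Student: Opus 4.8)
The plan is to assemble the three facts already in hand --- the Proposition above, Theorem~\ref{main:thm}, and the trivial remark that a $2r$-regular graph has all degrees even --- treating the cases $r\ge 2$ and $r=1$ separately. For $r\ge 2$ I would let $G$ be a uniformly random $2r$-regular graph on $n$ vertices. Since $2r\ge 3$, the Proposition makes $G$ $4$-sparse with probability $1-o(1)$, and on that event I would apply Theorem~\ref{main:thm} with the constant functions $a(x)\equiv b(x)\equiv r$: these are $\mathbb{N}_{\ge 2}$-valued precisely because $r\ge 2$, and $d(x)=2r=a(x)+b(x)$, so $G$ has an $(r,r)$-internal partition, which is the same thing as an internal partition. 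Hence a.a.e.\ $2r$-regular graph has an internal partition. Two routine points I would record along the way: the Proposition is proved in the configuration model whereas ``a.a.e.\ $d$-regular graph'' usually refers to the uniform model on simple graphs, but the two are interchangeable for $o(1)$-statements since the configuration model produces a simple graph with probability bounded away from $0$; and the partition output by Theorem~\ref{main:thm} is non-trivial, which is exactly what the hypothesis $a,b\ge 2$ secures (with $a\equiv b\equiv r\ge 2$ the degenerate partition $\langle\emptyset,V\rangle$ is never used).

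The case $r=1$ is not covered above: Theorem~\ref{main:thm} is vacuous when $d(x)=2$, and indeed $K_3$ is a $2$-regular graph with no internal partition, so $4$-sparseness alone cannot do the job. I would argue directly from the structure of $2$-regular graphs. Such a graph is a disjoint union of cycles, and a random one a.a.s.\ contains a cycle $C$ of length $m\ge 4$; in fact a.a.s.\ there is a cycle of length $\Theta(n)$, and even crudely the cycles of any bounded length cover only $O(1)$ vertices in expectation, so for large $n$ a long cycle must occur. I would then split $V(C)$ into two consecutive arcs $A_0,B_0$, each of length at least $2$ (possible because $m\ge 4$), and set $A=A_0\cup\bigl(V\setminus V(C)\bigr)$ and $B=B_0$. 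Both $A$ and $B$ are non-empty; a vertex outside $C$ sits on a cycle contained in $A$, so both its neighbours are on its side; and a vertex of $C$ has a same-side neighbour because its arc has length at least $2$. Thus $\langle A,B\rangle$ is a non-trivial internal partition of $G$, which completes the claim for every positive integer $r$.

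I do not expect a genuine obstacle in the Corollary itself: all the substantive work sits in Theorem~\ref{main:thm}, and what is left is the bookkeeping just sketched --- the parameter bound $r\ge 2$ (dispatched by the elementary cycle argument when $r=1$) and the transfer between the configuration and uniform models. The one place where a little care is really needed is the $r=1$ case, since a priori it could have been a genuine exception like $K_3$; the long-cycle construction shows it is not.
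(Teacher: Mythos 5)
Your proof is correct, and for $r\ge 2$ it is exactly the paper's (implicit) argument: combine the Proposition that a.a.e.\ $d$-regular graph ($d\ge 3$) is $4$-sparse with Theorem~\ref{main:thm} applied to the constant functions $a\equiv b\equiv r$, together with the routine remark that configuration-model statements of the form $1-o(1)$ transfer to the uniform model. The genuine difference is your treatment of $r=1$, and there you have in fact repaired a point the paper passes over: Theorem~\ref{main:thm} requires $a,b\ge 2$, so the intermediate corollary ``$4$-sparse with all degrees even implies an internal partition'' does not literally cover degree-$2$ vertices (indeed $K_3$ is vacuously $4$-sparse, has even degrees, and has no internal partition), and hence the Proposition-plus-Corollary chain by itself only yields the title statement for $r\ge 2$. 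Your direct argument for $2$-regular graphs --- a random $2$-regular graph is a disjoint union of cycles, a.a.s.\ some cycle has length at least $4$ because bounded-length cycles cover only $O(1)$ vertices in expectation, and splitting such a cycle into two arcs of length at least $2$ while keeping every other cycle entirely on one side gives a non-trivial internal partition --- is valid and is the natural way to close that case. So: same route as the paper where the paper gives one, plus a correct elementary patch for the $r=1$ case that the paper leaves unaddressed.
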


Before we prove theorem \ref{main:thm}, we need to introduce several definitions. 

\begin{definition}
Let $G=(V,E)$ be a graph, and let $f:V\rightarrow \mathbb N$.
\begin{itemize}
\item We say that $A\subseteq V$ is \textit{$f$-internal} if $d_A(x)\geq f(x)$ for every $x \in A$.
\item We say that $A\subseteq V$ is \textit{$f$-degenerate} if every non-empty subset $K \subseteq A$ has a vertex $x\in K$  such that $d_K(x)\leq f(x)$.
\end{itemize}
\end{definition}

\begin{remark}\label{internal_subset}
Clearly, a non-empty set is not $(a-1)$-degenerate if and only if it contains a non-empty $a$-internal subset.
\end{remark}

\begin{definition}

Let $A,B\subseteq V$ be non-empty disjoint sets, and let $f,g:A\rightarrow \mathbb N$. The pair $\left\langle A,B\right\rangle$ is said to be
\begin{itemize}
\item \textit{$(f,g)$-internal} if $A$ is $f$-internal and $B$ is $g$-internal.
\item \textit{$(f,g)$-degenerate} if $A$ is $f$-degenerate and $B$ is $g$-degenerate.
\end{itemize}
It is an \textit{$(f,g)$-degenerate partition} if it is an $(f,g)$-degenerate pair and, in addition, $B=V\backslash A$.

\end{definition}

We claim next that for every $(a,b)$-degenerate partition $\left\langle A,B\right\rangle$ it holds that $|A|\geq 2$ (and, by symmetry, also $|B|\geq 2$). Otherwise, every vertex $x\in B$ satisfies $d_B(x)=d(x)-d_A(x)\geq d(x)-1> d(x)-a(x) = b(x)$, so that $B$ is not $b$-degenerate. The last inequality uses the assumption that $a(v)\geq 2$ for all $v\in V$.

\subsection*{Proof of Theorem \ref{main:thm}}
For the proof of \ref{main:thm} it clearly suffices to consider the case where $d(x) = a(x)+b(x)$ for every $x\in V$. Our proof is based on the methodology initiated by Stiebitz \cite{Stiebitz1996}. The proof is by contradiction. We assume that $G$ is a counterexample, i.e., a $4$-sparse $d$-regular graph with no $(a,b)$-internal partition.  
\\

We next define an objective function on vertex partitions of the graph. By optimizing it, we will achieve a contradiction.\\

For a function $f:V\to \mathbb N$ and $S\subseteq V$, we denote $f(S):=\sum_{x\in S} f(x)$. We make substantial use of the following function $w$ that is
defined for every partition $\left\langle A,B\right\rangle$ of $V$ as follows:
\[w(A,B)=a(B)+b(A)-e(A,B)\]
We calculate the change in $w$ when one vertex changes sides. If $\left\langle A',B'\right\rangle = \left\langle A \cup x,B\backslash x \right\rangle$, then
\[w(A',B') - w(A,B) = 2 \left(b(x)-d_B(x) \right) \]
and if $\left\langle A'',B''\right\rangle =  \left\langle A \backslash y,B\cup y \right\rangle$, then
\[w(A'',B'') - w(A,B) = 2 \left( a(y)-d_A(y) \right)\] \\
We denote by $\Delta w$ the change in $w$ and call a partition of $V$ {\em locally maximal} if $\Delta w\le 0$ whenever a single vertex moves from one part to the other.
\\
For the sake of completeness, we reproduce two easy but crucial propositions, following Stiebitz.

\begin{proposition}\label{AB_disjoint_internal}
If $G$ has an $(a,b)$-internal pair, then it has an $(a,b)$-internal partition.
\end{proposition}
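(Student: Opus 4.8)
The plan is an extremal argument in the spirit of Stiebitz. Among all $(a,b)$-internal pairs of $G$ --- which exist by hypothesis --- I would fix one, $\langle A,B\rangle$, that maximizes $|A\cup B|=|A|+|B|$, and then argue that $C:=V\setminus(A\cup B)$ must be empty, so that $\langle A,B\rangle$ is in fact an $(a,b)$-internal partition. Suppose not. On $C$ define the \emph{residual demands} $a'(v):=a(v)-d_A(v)$ and $b'(v):=b(v)-d_B(v)$ for $v\in C$; these record how many further neighbours inside a future enlargement of $A$, respectively of $B$, the vertex $v$ still needs.

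The first step is to rule out, for the chosen pair, the existence of a non-empty $a'$-internal subset $K\subseteq C$: if there were one, then $\langle A\cup K,\,B\rangle$ would again be an $(a,b)$-internal pair --- internality of $A$ is only helped by adding vertices, $B$ is untouched, and each $v\in K$ gets $d_{A\cup K}(v)=d_A(v)+d_K(v)\ge d_A(v)+a'(v)=a(v)$ --- contradicting the maximality of $|A\cup B|$. Symmetrically, $C$ contains no non-empty $b'$-internal subset (in particular, testing singletons, $a'(v)\ge 1$ and $b'(v)\ge 1$ for every $v\in C$). By Remark \ref{internal_subset} this is exactly the statement that $C$ is both $(a'-1)$-degenerate and $(b'-1)$-degenerate. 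The second step is then a counting argument: a degeneracy ordering witnessing $(a'-1)$-degeneracy eliminates the vertices of $C$ one at a time, each vertex $v$ being incident to at most $a'(v)-1$ of the edges of $G[C]$ still present when it is removed, whence $e(G[C])\le a'(C)-|C|$; symmetrically $e(G[C])\le b'(C)-|C|$. Adding these and using $a'(v)+b'(v)=a(v)+b(v)-d_A(v)-d_B(v)=d_C(v)$ (here I invoke the standing assumption $d(x)=a(x)+b(x)$ of this subsection), i.e.\ $a'(C)+b'(C)=2e(G[C])$, I obtain $2e(G[C])\le 2e(G[C])-2|C|$, so $|C|\le 0$ --- contradicting $C\neq\emptyset$. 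Hence $C=\emptyset$, and the chosen pair is the desired partition.

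The step I expect to carry the real weight is the first one. The useful move is not the naive ``flip a single vertex'' but ``absorb a whole $a'$-internal subset of $C$ into $A$'' (and dually into $B$); the point is that the \emph{only} obstruction to performing such a move is that $C$ contains no non-empty internal subset for either residual demand, which is precisely the hypothesis of Remark \ref{internal_subset}, after which the edge count closes the argument immediately. It is worth noting that this proof uses neither $4$-sparseness nor regularity of $G$ --- only $d\ge a+b$ (equality after the reduction already made) --- so it is the general-graph ingredient of the overall argument, with the structural hypotheses entering elsewhere.
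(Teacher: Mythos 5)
Your proof is correct, and it starts exactly as the paper's does: both arguments fix an $(a,b)$-internal pair $\langle A,B\rangle$ maximizing $|A|+|B|$ and aim to show that $C:=V\setminus(A\cup B)$ is empty. Where you diverge is the closing step. The paper finishes in one line: by maximality every $x\in C$ has $d_A(x)\le a(x)-1$, hence $d_{B\cup C}(x)=d(x)-d_A(x)\ge b(x)+1$, so $\langle A,B\cup C\rangle$ is itself an $(a,b)$-internal partition (a larger internal pair), a contradiction. You instead rule out non-empty $a'$- and $b'$-internal subsets of $C$ for the residual demands, convert this via Remark \ref{internal_subset} into $(a'-1)$- and $(b'-1)$-degeneracy of $C$, and finish with a double edge count along degeneracy orderings, giving $e(G[C])\le a'(C)-|C|$ and $e(G[C])\le b'(C)-|C|$ and hence $|C|\le 0$. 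All of these steps are sound, and, as you note, they use only $d\ge a+b$, just like the paper's proof. The counting step is, however, more than is needed: once the singleton case of your first step gives $a'(v)\ge 1$ for every $v\in C$, the bound $d_C(v)=d(v)-d_A(v)-d_B(v)\ge a'(v)+b'(v)\ge b'(v)+1$ shows that $C$ itself is a non-empty $b'$-internal subset of $C$, contradicting the symmetric half of your first step at once --- and this shortcut is precisely the paper's move of absorbing all of $C$ into $B$. So your argument is a valid but slightly heavier variant of the same extremal proof; the degeneracy-plus-counting machinery buys nothing extra for this statement, though it is the kind of argument that would still function if one could not absorb the whole leftover set in a single step.
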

\begin{proof}
Consider an $(a,b)$-internal pair of sets $A,B\subset V$, that maximizes $|A|+|B|$. If $\left\langle A,B\right\rangle$ is not an internal partition, then $U:=V\backslash(A\cup B)$ is non-empty. By maximality, for every vertex $x\in U$ it holds that $d_A(x)\le a(x)-1$, and hence $d_{B\cup U}(x)\ge b(x)+1$. Thus the partition $\left\langle A,B\cup U\right\rangle$ is $(a,b)$-internal contrary to the assumed maximality.
\end{proof}

\begin{proposition}\label{collapsibility}
If $G$ has no $(a,b)$-internal partition, and if $ A\subsetneq V$ is not $(a-1)$-degenerate, then $V\backslash A$ is $(b-1)$-degenerate.
\end{proposition}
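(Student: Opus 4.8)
The plan is to prove the contrapositive-flavored statement directly by assuming, toward a contradiction, that $V\backslash A$ is \emph{not} $(b-1)$-degenerate, and then to manufacture an $(a,b)$-internal partition, contradicting the standing hypothesis on $G$. The entire argument is a short chaining of Remark \ref{internal_subset} with Proposition \ref{AB_disjoint_internal}; no new combinatorial input is needed.

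First I would extract a witnessing set on the $A$-side. Since $A$ is not $(a-1)$-degenerate, $A$ is in particular non-empty (an empty set is vacuously $f$-degenerate for every $f$), so Remark \ref{internal_subset} applies and yields a non-empty $a$-internal subset $A_0\subseteq A$, i.e.\ $d_{A_0}(x)\ge a(x)$ for every $x\in A_0$. Next, assuming for contradiction that $B:=V\backslash A$ is not $(b-1)$-degenerate, the same remark (again $B\neq\emptyset$) produces a non-empty $b$-internal subset $B_0\subseteq B$, with $d_{B_0}(y)\ge b(y)$ for every $y\in B_0$.

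Now $A_0$ and $B_0$ are automatically disjoint, since $A_0\subseteq A$ while $B_0\subseteq V\backslash A$, and both are non-empty. Hence $\left\langle A_0,B_0\right\rangle$ is an $(a,b)$-internal pair. By Proposition \ref{AB_disjoint_internal}, $G$ then possesses an $(a,b)$-internal partition, contradicting the hypothesis that $G$ has none. This contradiction shows that $V\backslash A$ must be $(b-1)$-degenerate, which is the claim.

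The only place requiring any care—and thus the ``main obstacle,'' such as it is—is bookkeeping about non-emptiness: one must observe that ``not $(a-1)$-degenerate'' forces $A\neq\emptyset$ (and likewise on the $B$-side) so that Remark \ref{internal_subset} is legitimately invoked, and one must note that the complementary containments $A_0\subseteq A$, $B_0\subseteq V\backslash A$ make disjointness of the witnessing internal subsets free of charge. With those two observations in place the proof is essentially a one-liner built on the two preceding propositions.
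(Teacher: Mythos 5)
Your proof is correct and follows exactly the paper's own argument: extract an $a$-internal subset of $A$ and, assuming $V\backslash A$ is not $(b-1)$-degenerate, a $b$-internal subset of $V\backslash A$, then apply Proposition \ref{AB_disjoint_internal} to contradict the nonexistence of an $(a,b)$-internal partition. The extra care about non-emptiness is fine but adds nothing beyond what the paper implicitly uses.
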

\begin{proof}

Since $A$ is not $(a-1)$-degenerate it has a non-empty $a$-internal subset. If $V\backslash A$ is not $(b-1)$-degenerate, then it contains a non-empty $b$-internal subset. This gives an $(a,b)$-internal pair, contrary to Proposition \ref{AB_disjoint_internal}.
\end{proof}

Consider the family of non-empty sets $A\subsetneq V$ that are $a$-degenerate, but not $(a-1)$-degenerate. This family is not empty (for instance, take $A$ to be an inclusion-minimal $a$-internal subset). Among this family of sets, take a set $A$ such that
\begin{itemize}
\item the partition $\left\langle A,V\backslash A\right\rangle$ is locally maximal for $w$, and
\item minimizes $|A|$ under these assumptions.
\end{itemize}

We claim now that the resulting set $A$ {\em is} $a$-internal. Otherwise, there is $v\in A$ such that $d_A(v)\leq a(v)-1$. Then, however, $A \backslash {v}$ is non-empty $a$-degenerate and not $(a-1)$-degenerate. In addition, upon moving $v$ from $A$ to $B=V\backslash A$ it holds that $\Delta w\geq 2$, contradicting the maximality of $w$.

We next consider the vertices of "low internal-degree" in $A$ and in $B$.
Denote \[C = \{v\in A ~|~ d_A(v) = a(v) \}~~~ \text{and}  ~~~D = \{v\in B ~|~ d_B(v) \leq  b(v) -1 \}.\]
Note that $C\neq \emptyset$ since $A$ is $a$-degenerate. Also $D\neq \emptyset$, since $B$ is $(b-1)$-degenerate by Proposition \ref{collapsibility}.
For every $x\in D$ by moving $x$ to $A$ we get $\Delta w \geq 2$. Also, for $y\in C$, by moving $y$ to $B$ we get $\Delta w \geq 0$.

\begin{proposition}
For every $x \in D$ there is a subset $A_x \subseteq A$ such that $A_x \cup x$ is $(a+1)$-internal.
\end{proposition}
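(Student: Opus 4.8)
The plan is to argue by contradiction, exploiting the maximality that defines $\langle A,V\backslash A\rangle$. Suppose that for some $x\in D$ there is no $A_x\subseteq A$ with $A_x\cup x$ being $(a+1)$-internal. First I would recast this as a degeneracy statement. Applying Remark \ref{internal_subset} with $a+1$ in place of $a$, and using that $A$ is $a$-degenerate, we see that $A$ contains no non-empty $(a+1)$-internal subset; hence every non-empty $(a+1)$-internal subset of $A\cup x$ must contain $x$, and such a subset would be an admissible choice of $A_x\cup x$. Therefore the hypothesis is equivalent to saying that $A\cup x$ contains no non-empty $(a+1)$-internal subset, that is --- again by Remark \ref{internal_subset} --- that $A\cup x$ is $a$-degenerate.

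I would then check that $A\cup x$ belongs to the family from which $A$ was chosen, namely the non-empty proper subsets of $V$ that are $a$-degenerate but not $(a-1)$-degenerate. It is non-empty because $A\ne\emptyset$. It is a proper subset of $V$ because $\langle A,V\backslash A\rangle$ is an $(a,b)$-degenerate partition (indeed $V\backslash A$ is $(b-1)$-degenerate, hence $b$-degenerate), so $|V\backslash A|\ge 2$ and $(V\backslash A)\backslash x\ne\emptyset$. It is $a$-degenerate by the previous paragraph, and it is not $(a-1)$-degenerate because $A$ --- hence also $A\cup x$ --- contains a non-empty $a$-internal subset. Finally, since $x\in D$ we have $d_{V\backslash A}(x)\le b(x)-1$, so moving $x$ from $V\backslash A$ into $A$ changes $w$ by $2\bigl(b(x)-d_{V\backslash A}(x)\bigr)\ge 2>0$. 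Thus $A\cup x$ is a family member, obtained from $A$ by a single vertex move, whose partition has strictly larger value of $w$ --- contradicting the maximality of $w$. This proves the proposition.

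The argument is short; the step to get right is the first one, where Remark \ref{internal_subset} is combined with the observation that an $(a+1)$-internal subset of $A\cup x$ is forced to contain $x$, so that ``there is no $A_x$'' genuinely amounts to ``$A\cup x$ is $a$-degenerate'' and therefore to ``$A\cup x$ re-enters the family''. After that, the only slightly delicate point in the family-membership check is that $A\cup x$ is a proper subset of $V$ --- which is exactly what $|V\backslash A|\ge 2$ was recorded for --- and the inequality $\Delta w\ge 2$ is immediate from the change-of-sides computation made earlier. Note that $4$-sparseness plays no role in this proposition.
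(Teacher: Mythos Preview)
Your proof is correct and follows essentially the same route as the paper's. The paper argues directly---$\Delta w\ge 2$, $A\cup x$ is not $(a-1)$-degenerate, hence by maximality $A\cup x$ cannot be $a$-degenerate, so it contains an $(a+1)$-internal subset, which must contain $x$---while you run the contrapositive, assuming no such $A_x$ exists, deducing that $A\cup x$ is $a$-degenerate, and deriving the same contradiction with maximality; the logical content is identical, and you are a bit more explicit about the check that $A\cup x\subsetneq V$.
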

\begin{proof}
As mentioned, moving $x\in D$ to $A$ yields $\Delta w \geq 2$. Also, $A\cup x$ is clearly not $(a-1)$-degenerate. Consequently, by the maximality of $w(A,V\backslash A)$,  $A\cup x$ cannot be $a$-degenerate, and it must contain an $(a+1)$-internal subset.  This $(a+1)$-internal subset must contain $x$, as claimed. Note also that $A_x$ is necessarily $a$-internal.
\end{proof}

\begin{proposition}
For every $x\in D$ and for every such set $A_x$ it holds that $C\subseteq A_x$.
\end{proposition}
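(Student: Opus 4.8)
The plan is to derive a contradiction from the extremal way in which $A$ was chosen: recall that $A$ maximizes $w$ among all non-empty proper subsets of $V$ that are $a$-degenerate but not $(a-1)$-degenerate, and, subject to that, has the smallest possible cardinality (this is the ``maximality of $w(A,V\setminus A)$'' already invoked in the previous proof). Suppose towards a contradiction that some $y\in C$ lies outside $A_x$.

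First I would record two trivial facts about $A_x$. It is non-empty, since otherwise $A_x\cup x=\{x\}$ would be $(a+1)$-internal, which is absurd because $d_{\{x\}}(x)=0<a(x)+1$; and it is $a$-internal, because deleting $x$ from the $(a+1)$-internal set $A_x\cup x$ lowers the internal degree of each remaining vertex by at most one. Now set $A':=A\setminus y$. Since $y\in A$ but $y\notin A_x$, the whole of $A_x$ survives inside $A'$. The key claim is that $A'$ still belongs to the family over which $A$ was optimized: it is $a$-degenerate, being a subset of the $a$-degenerate set $A$; it contains the non-empty $a$-internal set $A_x$, hence is not $(a-1)$-degenerate by Remark \ref{internal_subset}; and $\emptyset\neq A_x\subseteq A'\subsetneq V$.

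Next I would compute the effect on $w$ of moving $y$ from $A$ to $B=V\setminus A$. Because $y\in C$ we have $d_A(y)=a(y)$, so by the formula recorded above this change equals $2\bigl(a(y)-d_A(y)\bigr)=0$; that is, $w(A',V\setminus A')=w(A,V\setminus A)$. Thus $A'$ is another member of the family attaining the maximal value of $w$, yet $|A'|=|A|-1<|A|$, contradicting the minimality of $|A|$. Hence no such $y$ exists, i.e. $C\subseteq A_x$.

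The only step that needs any care is the verification that $A'$ still lies in the family — concretely, that it is not $(a-1)$-degenerate — and this is exactly where the assumption $y\notin A_x$ is used, since it is what keeps the $a$-internal ``core'' $A_x$ intact after the deletion of $y$. I expect no obstacle beyond this; in particular the argument uses only that $A_x$ is a non-empty $a$-internal subset of $A$, and not the stronger facts that $A_x\cup x$ is $(a+1)$-internal, that $x\in D$, or that $G$ is $4$-sparse (presumably those enter in the subsequent steps).
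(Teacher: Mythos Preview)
Your argument is correct and is essentially the same as the paper's: assume $y\in C\setminus A_x$, observe that $A\setminus y$ is still $a$-degenerate, still not $(a-1)$-degenerate (because the $a$-internal set $A_x$ survives), and has the same $w$-value since $d_A(y)=a(y)$, which contradicts the minimality of $|A|$. Your added remarks on why $A_x$ is non-empty and $a$-internal just make explicit what the paper left implicit (the latter was noted at the end of the preceding proposition).
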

\begin{proof}
Suppose there is $y \in C\backslash A_x$. Clearly, $A\backslash y$ is $a$-degenerate. Also, $A\backslash y$ is not $(a-1)$-degenerate since it contains the $a$-internal subset $A_x$. In addition $w(A \backslash {y},B\cup {y})=w(A,B)$,
contradicting the minimality of $|A|$.
\end{proof}

\begin{proposition}\label{CxAdjacent}
Every vertex in $C$ is adjacent to every vertex in $D$.
\end{proposition}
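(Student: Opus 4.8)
The plan is to argue by contradiction via a single \emph{swap}. Suppose some $y\in C$ and $x\in D$ are non-adjacent; put $A^{*}=(A\backslash y)\cup x$ and $B^{*}=V\backslash A^{*}=(B\backslash x)\cup y$. I will show that $A^{*}$ still belongs to the family of non-empty proper subsets of $V$ that are $a$-degenerate but not $(a-1)$-degenerate, and that $w(A^{*},B^{*})>w(A,B)$; this contradicts the maximality of $w(A,V\backslash A)$ in the choice of $A$.

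For the effect on $w$ I would decompose the swap into two single-vertex moves and use the two displayed formulas. Moving $x$ from $B$ into $A$ changes $w$ by $2\big(b(x)-d_{B}(x)\big)\ge 2$, since $x\in D$. Moving $y$ out of $A\cup x$ afterwards changes $w$ by $2\big(a(y)-d_{A\cup x}(y)\big)$, and $d_{A\cup x}(y)=d_{A}(y)+[x\sim y]=a(y)$ because $y\in C$ and $x\not\sim y$, so this second change is $0$. Hence $w(A^{*},B^{*})-w(A,B)\ge 2$. Moreover $x\in A^{*}$ and $y\in B^{*}$, so $A^{*}$ is a non-empty proper subset of $V$.

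Next I would check that $A^{*}$ is not $(a-1)$-degenerate. Fix, by the earlier proposition, a set $A_{x}\subseteq A$ with $A_{x}\cup x$ being $(a+1)$-internal. Deleting $y$ from $A_{x}\cup x$ lowers the internal degree of each remaining vertex by at most one and leaves that of $x$ unchanged (as $x\not\sim y$), so $(A_{x}\cup x)\backslash y=(A_{x}\backslash y)\cup x$ is a non-empty $a$-internal set; and it lies in $(A\backslash y)\cup x=A^{*}$ since $A_{x}\subseteq A$. By Remark \ref{internal_subset}, $A^{*}$ is not $(a-1)$-degenerate.

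The crux --- and the only place the previous proposition is really needed --- is that $A^{*}$ is $a$-degenerate. Suppose not; then, by Remark \ref{internal_subset} applied with $a+1$ in place of $a$, the set $A^{*}$ contains a non-empty $(a+1)$-internal subset $T$. Since $A$ is $a$-degenerate, so is its subset $A\backslash y$, and an $a$-degenerate set has no $(a+1)$-internal subset (apply its degeneracy condition to that very subset); hence $T\not\subseteq A\backslash y$, i.e.\ $x\in T$. Then $T\backslash x\subseteq A$ and $(T\backslash x)\cup x=T$ is $(a+1)$-internal, so $T\backslash x$ is an admissible ``$A_{x}$'', and the previous proposition forces $C\subseteq T\backslash x\subseteq A\backslash y$, whence $y\in A\backslash y$ --- absurd. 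Thus $A^{*}$ is $a$-degenerate, so it lies in the family while $w(A^{*},B^{*})>w(A,B)$, the desired contradiction. The only point demanding care is that each $\Delta w$ formula is evaluated at the correct intermediate partition, which the two-move bookkeeping in the second paragraph already arranges.
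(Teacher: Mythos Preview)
Your argument is correct, but it takes a much longer route than the paper's. The paper dispatches the claim in three lines: by the immediately preceding proposition $y\in C\subseteq A_x$, and since $A_x\cup x$ is $(a+1)$-internal we have $d_{A_x\cup x}(y)\ge a(y)+1$, whereas $d_{A_x}(y)\le d_A(y)=a(y)$ because $y\in C$ and $A_x\subseteq A$; the extra neighbour can only be $x$. You instead perform the swap $A^{*}=(A\setminus y)\cup x$, show that $w$ strictly increases, and then verify that $A^{*}$ remains $a$-degenerate and not $(a-1)$-degenerate --- the $a$-degeneracy check again leaning on $C\subseteq A_x$ (applied to the would-be $(a+1)$-internal subset $T$). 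So both arguments pivot on the same preceding proposition; the paper simply reads the adjacency straight off it, while you rebuild the full extremal contradiction around it. Your route buys nothing additional here --- the direct degree count is the shortest path --- though it does mirror the swap-and-recheck methodology used in the final step of the main proof.
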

\begin{proof}
Consider some $x\in D$ and $y\in C$. Then $d_A(y)=a(y)$. But $y$ also belongs to the $(a+1)$-internal set $A_x\cup x$, so that $d_{A\cup x}(y)=a(y)+1$. The conclusion follows.
\end{proof}

\begin{proposition}\label{yzAdjacent}
There are two adjacent vertices in $C$.
\end{proposition}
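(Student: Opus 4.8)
The plan is to argue by contradiction: suppose $C$ is an independent set. I have available that $C$ is the set of "tight" vertices in $A$ (with $d_A(v) = a(v)$) and that $D$ is nonempty, that every vertex of $C$ is adjacent to every vertex of $D$ (Proposition \ref{CxAdjacent}), and that for each $x \in D$ there is an $(a+1)$-internal set $A_x \cup x$ with $C \subseteq A_x \subseteq A$. The key extra hypotheses are that $G$ is $4$-sparse and that $a, b \geq 2$, so every vertex in $D$ has $d_A \geq a(x) - b(x) + (b(x)-1)$... more usefully, every $x \in D$ has degree $d(x) = a(x)+b(x) \geq 4$.

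First I would fix $x \in D$ and use the $(a+1)$-internal set $A_x \cup x$: inside it, $x$ has at least $a(x)+1 \geq 3$ neighbors, all in $A_x \subseteq A$, and since $C \subseteq A_x$, every vertex $y \in C$ has $d_{A_x \cup x}(y) \geq a(y)+1 \geq 3$. Now I would try to extract a forbidden configuration on four vertices. The natural move is to pick two vertices $x, x' \in D$ if $|D| \geq 2$: both are adjacent to all of $C$, and if $|C| \geq 2$, say $y, z \in C$, then $\{x, x', y, z\}$ spans the four edges $xy, xz, x'y, x'z$ plus possibly $xx'$ — and if $C$ is independent, $yz \notin E$, so this gives a $C_4$, not yet a diamond or $K_4$; I need one more edge. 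To force the fifth edge I would look harder: $x$ lies in the $(a+1)$-internal set $A_x \cup x$ and has $\geq 3$ neighbors there; at least one neighbor $u$ of $x$ other than $y, z$ exists (since $a(x)+1 \geq 3$), and $u \in A$. If I can show $u$ is adjacent to $y$ or to $z$ (or that two such vertices $u, u'$ are mutually adjacent), I get five edges on four vertices.

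The cleanest route, I expect, is to work entirely with a single $x \in D$ when $|C|$ is large. Since $x$ is adjacent to all of $C$ (Proposition \ref{CxAdjacent}) and $d(x) = a(x)+b(x)$, we get $|C| \leq a(x) + b(x)$. If $C$ is independent, then for any $y, z \in C$ the triple $\{x, y, z\}$ spans only the two edges $xy, xz$. To reach a contradiction I would bring in a fourth vertex using the tightness $d_A(y) = a(y) \geq 2$: $y$ has $a(y) \geq 2$ neighbors in $A$, and since $y$ is not adjacent to $z$ (independence) but both $y$'s neighbors lie in $A$, pick a neighbor $w \in A$ of $y$; then consider $\{x, y, z, w\}$. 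This still needs $w$ adjacent to $x$ or $z$ to exceed four edges. So the genuine obstacle is \textbf{producing the fifth edge among four vertices} — the $4$-sparseness must be leveraged against a cluster of edges forced by the overlapping $(a+1)$-internal sets $A_x \cup x$ for the various $x \in D$, together with the common neighbourhood structure of $C$ and $D$.

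I anticipate the actual argument counts edges: assume $C$ independent, consider $C \cup D \cup \{$some of the $A_x\}$, and show that the internality constraints ($d_A(y) = a(y)$ for $y \in C$, $d_{A_x \cup x} \geq a+1$ pushing neighbours of $x$ into $A$, every $C$–$D$ pair adjacent) force, on some four-vertex subset, at least five edges, contradicting $4$-sparseness. If $C$ has a vertex $y$ with two neighbours $u, v$ in $A$ that are adjacent to each other or to a common vertex of $D$, we are done; otherwise the independence of $C$ plus the completeness of the $C$–$D$ bipartite graph already overconstrains things once $|D|\ge 1$ and $a,b\ge 2$ guarantee enough neighbours. Thus the plan is: (1) assume $C$ independent; (2) fix $x\in D$, use $A_x\cup x$ to locate neighbours of $x$ in $A$ beyond $C$; (3) combine with Proposition \ref{CxAdjacent} and the tightness of $C$-vertices to exhibit four vertices spanning five edges; (4) invoke $4$-sparseness for the contradiction. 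The delicate point — and where I'd spend the most care — is step (3), choosing \emph{which} four vertices, since a naive choice yields only a $C_4$.
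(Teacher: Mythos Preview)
Your plan has a genuine gap, and it stems from reaching for the wrong hypothesis. You are trying to derive the existence of an edge inside $C$ from $4$-sparseness, but the paper's proof of this proposition does not use $4$-sparseness at all; it uses the extremal choice of $A$, specifically the minimality of $|A|$.

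Concretely, your step (3) --- ``choosing which four vertices'' so that five edges are forced --- is not a matter of care but an actual obstruction. Take the natural candidates: if $C$ is independent, $|C|\ge 2$, and $|D|\ge 2$, then any $y,z\in C$ and $x,x'\in D$ span only the four edges $xy,xz,x'y,x'z$ (a $4$-cycle), and nothing you have written forces $xx'\in E$. Bringing in a neighbour $w\in A$ of $y$ does not help either: since $C$ is assumed independent, $w\notin C$, so Proposition~\ref{CxAdjacent} gives you no edge from $w$ to $D$, and the $(a+1)$-internality of $A_x\cup x$ only tells you $w\in A_x$ (because all of $y$'s $A$-neighbours lie in $A_x$), not that $w$ is adjacent to $x$ or to $z$. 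So your outline never produces the fifth edge, and you yourself flag this as ``the delicate point'' without resolving it.

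The intended argument is short and uses only the setup of the extremal $A$. Pick any $y\in C$. Since $d_A(y)=a(y)$, moving $y$ to $B$ gives $\Delta w=0$, so $w(A\setminus y,B\cup y)=w(A,B)$; and $A\setminus y$ is trivially $a$-degenerate. If $A\setminus y$ were \emph{not} $(a-1)$-degenerate, it would contradict the minimality of $|A|$ (same $w$, smaller set, still $a$-degenerate and not $(a-1)$-degenerate). Hence $A\setminus y$ is $(a-1)$-degenerate, so some $z\in A\setminus y$ has $d_{A\setminus y}(z)\le a(z)-1$. But $A$ is $a$-internal, so $d_A(z)\ge a(z)$; therefore $yz\in E(G)$ and $d_A(z)=a(z)$, i.e.\ $z\in C$. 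The $4$-sparseness hypothesis enters only \emph{after} this proposition, to control the neighbourhoods of the adjacent pair $y,z$ just found.
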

\begin{proof}
We know already that $C\neq \emptyset$. Consider some $y\in C$. Clearly $A\backslash y$ is $a$-degenerate. Also $w(A \backslash {y},B\cup {y})=w(A,B)$, therefore, by the minimality of $|A|$, the set $A\backslash y$ must be $(a-1)$-degenerate. In particular there is $z \in A$ such that $d_{A\backslash y}(z) \leq a(x)-1$, whereas $d_A(z)\geq a(x)$. It follows that $z\in C$ and $yz\in E(G)$.
\end{proof}

The above propositions, and the $4$-sparsity of $G$ imply that
\begin{itemize}
\item Neither y nor z have an additional neighbor in C.
\item The only common neighbor of $y$ and $z$ is $x\in D$. 
\end{itemize}

In other words, $d_{A\backslash \{y,z\}}(v)\geq a(v)$ for every $v\in A\backslash \{y,z\}$. This means that $A':=A\backslash \{y,z\}$ is $a$-internal. Therefore, $A'$ is $a$-degenerate, and not $(a-1)$-degenerate. Also, $A'\neq \emptyset$ since $d_A(y)=a(y)\geq 2$. Finally, $w(A',V\backslash A')\geq w(A,V\backslash A)$, contrary to the minimality of $|A|$.\hfill$\qed $

\section{Some Computational and Experimental Results} \label{alg}

\subsection{Algorithmic Realization}

As mentioned, to the best of our knowledge, all previous existence proofs of $(a,b)$-internal partitions translate into polynomial-time algorithms. This applies as well to Theorem \ref{main:thm} and its proof. Thus, let $G$ be a $4$-sparse graph, and assume $d(x)=a(x)+b(x)$  for every $x\in V(G)$. We maintain from the proof of the theorem the definitions of the sets $C$ and $D$. Here is a polynomial-time algorithm that finds an $(a,b)$-internal partition of $G$:

\medskip

{\bf Initialization.} Find a set $A$ that is $a$-degenerate but not $(a-1)$-degenerate. Repeatedly remove any vertex $x\in A$ with $d_A(x)<a(x)$ until none remain.

{\bf Loop.} While $V\backslash A$ is $(b-1)$-degenerate:
\begin{enumerate}
\item If there is $x\in D$ such that $A\cup x$ is $a$-degenerate, update $A:=A\cup x$.
\item Else, there is a triangle $xyz$ with $x\in D$ and $y,z\in C$. Update $A:=A\backslash \{y,z\}$.
\end{enumerate}

Note that the {\sl initialization} can be done in polynomial time, since (i) for any $\varphi: V\to\mathbb{N}$ it takes linear time to check whether a given $S\subseteq V$ is $\varphi$-degenerate, (ii) $G$ is $(a+b)$-internal, (iii) if $S$ is not $(\varphi+1)$-degenerate, then $S\backslash x$ is not $\varphi$-degenerate for every $x\in S$ and (iv) if, moreover, $d(x)\le\varphi(x)$, then $S\backslash x$ is not $(\varphi+1)$-degenerate.\\
The if-else {\sl dichotomy} is proven in Propositions \ref{CxAdjacent} and \ref{yzAdjacent}.\\
The algorithm {\sl terminates}, since in step (1) $\Delta w >0$ , and step (2) decreases $|A|$ while keeping $\Delta w\geq 0$. Termination is proved by induction on lexicographically-ordered pairs $(w,-|A|)$.\\
$A$ remains not-$(a-1)$-degenerate throughout, and upon termination $V\backslash A$ is not $(b-1)$-degenerate. Therefore, $A$ and $V\backslash A$ contain $a$-internal and $b$-internal subsets, respectively, which is, according to proposition (\ref{AB_disjoint_internal}), a sufficient condition for the existence of an internal partition that can be found in polynomial time (See Proposition $1$ in \cite{Bazgan2003TR}).

\subsection{Improving Previous Experimental Results}
In \cite{Gerber2000} some experimental results are presented. They apply a heuristic algorithm in an attempt to find a $\left(\lceil{\frac{d(x)}{2}}\rceil,\lceil{\frac{d(x)}{2}}\rceil\right)$-internal partition in random graphs. Their algorithm starts from a random partition and at each iteration minimizes $f(A,B)=\sum_{v\in A}  (d_A(v)-d_B(v))^++\sum_{v\in B} (d_B(v)-d_A(v))^+$  where the minimum is taken over all partitions which were achieved by switching a single vertex most of whose neighbors are on the other side of the partition. 
The process can terminate with either an internal partition or a trivial partition. It can also loop indefinitely. In the latter two cases, they restart the process.

We have experimented with a similar algorithm. The main change is that we consider only near-bisections $\left\langle A,V\backslash A\right\rangle$, and insist that $\left| |A|- \frac{|V|}{2}\right|\leq c(n)$ for $c(n)=\log_d(n)$. When this condition is violated, we move a random vertex from the big part to the small. This algorithm may either output an internal partition or loop forever. However, in extensive simulations with random $d$-regular graphs ($30 \leq n \leq 10000$ and $4\leq d \leq \min(50,\frac{n}{2})$) the algorithm has always found an internal partition in fewer than $5n$ iterations. This conclusion seems to be hardly affected by the exact choice of $c$.

\section{Discussion and Some Open Problems} \label{summary}

We have mentioned above the analogy between the existence of internal partitions and upper bounds on Cheeger constants. As shown by Alon \cite{Alon1997}, for every large $d$-regular graph the Cheeger constant is at most $\frac d2 -c\sqrt{d}$ for some absolute $c>0$.
Although conjecture \ref{finite_number} is still open, this prompts an even more far-reaching possibility.

\begin{problem}
Is it true that for every integer $\delta\ge 1$ there are integers $d$ and $n_0$ such that every $d$-regular graph on $n>n_0$ vertices has a $(\frac d2 +\delta,\frac d2 +\delta)$-internal partition ?
\end{problem}

Also, the upper bound on Cheeger's constant in Alon's paper is actually attained by a bisection (the two parts differ in size by at most one). This suggests:

\begin{problem}
Does conjecture \ref{finite_number} hold also with "near" bisections? E.g., where the cardinalities of the two parts differ by $O_d(1)$.
\end{problem}

How does the computational complexity of the internal partition vary as $n$ grows? So far, existence theorems have gone hand-in-hand with efficient search algorithms. Is this a coincidence or is there a real phenomenon?

\begin{problem}
Consider the computational complexity of both the decision and the search version of the internal partition problem for $d$-regular graphs on $n$ vertices. Conjecture \ref{finite_number} says that an internal partition exists whenever $n>n_0(d)$. Is there some $n_1(d)$ such that for $n>n_1(d)$ an internal partition can be found efficiently? If so, do $n_0$ and $n_1$ coincide?
\end{problem}

\section*{Acknowledgements}
We thank Amir Ban, David Eisenberg, Zur Luria, Jonathan Mosheiff, and Yuval Peled for careful reading and useful comments.

\bibliographystyle{alpha}

\end{document}